
\documentclass[12pt]{amsart}
\usepackage{amsmath,amsfonts,amssymb,amsthm}


\hoffset=-5mm \setlength{\topmargin}{-0.4in}
\setlength{\oddsidemargin}{0in} \setlength{\evensidemargin}{0in}
\setlength{\textheight}{9in} \setlength{\textwidth}{6.5in}

\theoremstyle{plain}
\newtheorem{theorem}{Theorem}

\theoremstyle{definition}

\newtheorem{remark}[theorem]{Remark}

\newcommand{\Z}{{\mathbb Z}}
\newcommand{\R}{{\mathbb R}}

\newcommand{\C}{{\mathbb C}}

\begin{document}

\title{An asymptotically sharp form of Ball's integral inequality}
\author{
 R.~Kerman, R.~Ol\!'hava, S.~Spektor}

\address{ Ron Kerman,
\noindent Address: Brock University, Canada}
 \email{ rkerman@brocku.ca}

 \address{ Rastislav Ol\!'hava,
 \noindent Address: Charles University, Prague}
 \email{ olhavara@centrum.sk}

\address{ Susanna Spektor,
\noindent Address: University of Alberta, Canada}
\email{ spektor@ualberta.ca}

\subjclass[2010]{ 33F05; 42A99.}
\keywords{Ball's integral inequality,  Sinc function, asymptotic expansion, Bessel function.}
\thanks{The research of the second author was in part supported by grant no. 13-14743S of the Grant Agency of the Czech Republic. }
\date{}
\maketitle

\thispagestyle{empty}

\begin{abstract}
 We solve the open problem of determining the second order term in the asymptotic expansion of the integral in Ball's integral inequality. In fact, we provide a method by which one can compute any term in the expansion. We also indicate how to derive an asymptotically sharp form of a generalized Ball's integral inequality.
\end{abstract}

\setcounter{page}{1}


\section{\textbf{Introduction}}

To prove that every $(n-1)$-dimensional section of the unit cube in $\R^n$ has volume at most $\sqrt 2$, K.~Ball \cite{Ball:1986} made essential use of the  inequality
\begin{equation}\label{1}
{\sqrt n}\int_{-\infty}^{\infty} \left|\frac{\sin t}{t}\right|^ndt\leq {\sqrt 2}{\pi}, \quad  n\geq 2,
\end{equation}
 in which equality holds if and only if $n=2$.

\bigskip

Later, Ball's integral inequality (\ref{1}) was proved  using different methods; see \cite{BBL:2010, NP:2000} (also see  \cite{KK} for an analogue of Ball's inequality). Independently of Ball,  D.~Borwein, J.~M.~Borwein and I.~E.~Leonard investigated, in \cite{BBL:2010},   the asymptotic expansion of the left side of (\ref{1}). They established the existence of real constants, $c_j$, such that
\begin{equation}\label{2}
{\sqrt n}\int_{0}^{\infty} \left|\frac{\sin t}{t}\right|^ndt\sim \sqrt{\frac{3 \pi}{2}}-\frac{3}{20}\sqrt{\frac{3 \pi}{2}}\frac 1n+\sum_{j=2}^{\infty}\frac{c_j}{n^j}, \quad \textit{as} \quad n\longrightarrow \infty,
\end{equation}
and posed the problem of determining the value of $c_2$.

\bigskip

K.~Oleszkiewicz and A.~Pelczy$\acute{\textmd{n}}$ski, in \cite{OP:2000}, proved the following variant of Ball's inequality, namely,
\begin{equation}\label{3}
n\int_{0}^{\infty}\left(\frac{2|J_1(t)|}{t}\right)^ntdt\leq 4, \quad n\geq 2,
\end{equation}
involving a special case of
\begin{equation*}
J_{\nu}(t):=\sum_{j=0}^{\infty}(-1)^j \left(\frac{t}{2}\right)^{2j+{\nu}}\frac{1}{j!\Gamma(j+{\nu}+1)}, \quad t\geq 0, {\nu}\geq \frac 12,
\end{equation*}
the Bessel function of order ${\nu}$. They showed that, with the method used to establish their inequality (\ref{3}), one can prove (\ref{1}). Also, they discussed the more general inequality
\begin{equation*}
n^{\nu}\int_0^{\infty}\left(2^{\nu}\Gamma({\nu}+1)\frac{\left|J_{\nu}(t)\right|}{t^{\nu}}\right)^nt^{2{\nu}-1}dt
< {2^{\nu}}\left(\int_0^{\infty}\left(2^{\nu}\Gamma({\nu}+1)\frac{J_{\nu}(t)}{t^{\nu}}\right)^2t^{2{\nu}-1}dt\right), \quad n>2.
\end{equation*}
They conjectured it holds  if and only if $\displaystyle{\frac 12 \leq \nu \leq 1}$. In this connection, they pointed out that H.~K$\ddot{\textmd{o}}$nig
has noticed the inequality is false when $\displaystyle{{\nu}=\frac k2,k=3,4,\ldots .}$

\bigskip

Our  paper is divided into three sections  and an appendix.
The first section is an Introduction.
The second section is devoted to  calculating the $c_2$ in (\ref{2}), thereby solving the problem posed by D.~Borwein, J.~M.~Borwein and I.~E.~Leonard.
The method that gives $c_2$  can be used to derive \textit{any} term in the asymptotic expansion in  (\ref{2}).
In the third section, we indicate how the method of Section 2 enables one to determine the asymptotic expansion of
\begin{equation*}
n^{\nu}\int_0^{\infty}\left(2^{\nu}\Gamma({\nu}+1)\frac{\left|J_{\nu}(t)\right|}{t^{\nu}}\right)^nt^{2{\nu}-1} dt, \quad n\geq 2,
\end{equation*}
for all ${\nu\geq 1/2}$.

\bigskip

\section{\textbf{An Asymptotically Sharp Form of Ball's Integral Inequality}}

In this section we answer the open question of D.~Borwein, J.~M.~Borwein and I.~E.~Leonard in the following theorem.
\begin{theorem}\label{t.h.A}
Let
\begin{equation*}
I(n):={\sqrt n}\int_{0}^{\infty} \left|\frac{\sin t}{t}\right|^ndt,
\end{equation*}
$n\geq 2$, and fix $m \in \Z_+, m\geq 3$. Then, there exist constants $c_3, c_4, \ldots, c_m$ such that
\begin{equation*}
I(n)=\sqrt{\frac{3\pi}{2}}\left[1-\frac{3}{20}\frac 1n - \frac{13}{1120} \frac{1}{n^2}+\sum_{j=3}^m \frac{c_j}{n^j}\right]+O\left(\frac{1}{n^{m+1}}\right).
\end{equation*}
\end{theorem}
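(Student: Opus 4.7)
The integral $I(n)$ is a classical candidate for Laplace's method, since $|\sin t / t|^n$ concentrates very sharply near $t = 0$ as $n \to \infty$. My plan is to split the range of integration at a convenient point (say $t = \pi$), discard an exponentially small tail, and then expand the resulting integrand in powers of $1/n$ after the change of variables that turns the leading exponential into a standard Gaussian. Since $|\sin t/t| \le 1/\pi$ on $[\pi,\infty)$, we have $\sqrt{n}\int_\pi^\infty |\sin t/t|^n\,dt = O(\sqrt{n}\,\pi^{-n})$, which is smaller than any $n^{-k}$ and can be absorbed into the remainder. On $(0,\pi)$, $\sin t/t > 0$, so the absolute value disappears and we may write $(\sin t/t)^n = \exp\!\bigl(n\log(\sin t/t)\bigr)$.

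The key ingredient is the Taylor expansion
\begin{equation*}
\log\frac{\sin t}{t} \;=\; -\frac{t^2}{6} - \frac{t^4}{180} - \frac{t^6}{2835} - \frac{t^8}{37800} - \cdots,
\end{equation*}
valid on $(0,\pi)$, which I would derive from the power series of $\sin t$ or, more systematically, via the generating function for the Bernoulli numbers. Substituting $u = t\sqrt{n/3}$ yields $nt^2/6 = u^2/2$ together with $nt^{2k}/(\text{const}) = O(u^{2k}/n^{k-1})$; concretely,
\begin{equation*}
\sqrt{n}\int_0^{\pi}\!\left(\frac{\sin t}{t}\right)^{\!n}\!dt \;=\; \sqrt{3}\int_0^{\pi\sqrt{n/3}} e^{-u^{2}/2}\exp\!\Bigl(-\tfrac{u^{4}}{20 n}-\tfrac{u^{6}}{105 n^{2}}-\cdots\Bigr)du.
\end{equation*}
Expanding the second exponential as a power series in $1/n$ and using the Gaussian moments $\int_0^\infty u^{2k}e^{-u^2/2}\,du = (2k-1)!!\sqrt{\pi/2}$ gives the asymptotic series. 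At order $1/n^2$ one collects the two contributions $\tfrac12(u^4/20)^2 = u^8/800$ and $-u^6/105$, producing
\begin{equation*}
\frac{1}{n^{2}}\Bigl(\tfrac{105}{800}-\tfrac{15}{105}\Bigr)\sqrt{\tfrac{\pi}{2}} \;=\; -\frac{13}{1120 n^{2}}\sqrt{\tfrac{\pi}{2}},
\end{equation*}
which, after multiplication by $\sqrt{3}$, yields the stated coefficient $-13/1120$ inside the bracket. Iterating the same scheme produces $c_3,c_4,\ldots,c_m$ mechanically: the coefficient of $1/n^{j}$ is a fixed polynomial in the Taylor coefficients of $\log(\sin t/t)$ integrated against $e^{-u^{2}/2}$.

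The main obstacle is the error analysis, which has to be done uniformly in $n$. I would truncate the Taylor series of $\log(\sin t/t)$ after enough terms to guarantee a remainder bounded by $C\, t^{2(m+2)}$ on all of $(0,\pi)$, then bound $\exp(n\cdot \text{remainder})$ using the elementary inequality $|e^{x}-1-\cdots-x^{k}/k!|\le |x|^{k+1}e^{|x|}/(k+1)!$; the factor $e^{|x|}$ is harmless because on the bulk region $\{u\lesssim n^{1/2-\eta}\}$ the argument is $o(1)$. One must also verify that extending the upper limit from $\pi\sqrt{n/3}$ to $\infty$ in each Gaussian integral introduces only an error of the form $e^{-cn}$. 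Finally, the intermediate region $\{t\in(0,\pi):\ u \ \text{large}\}$ must be handled by noting that $\log(\sin t / t) \le -t^{2}/6$ on $(0,\pi)$, so the integrand is dominated by a genuine Gaussian and the tail contribution is exponentially small. Once these uniform bounds are in place, collecting the $O(n^{-(m+1)})$ remainder is routine.
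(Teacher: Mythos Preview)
Your approach is correct and produces the same expansion, but the mechanics differ from the paper's. The paper does not take logarithms: it sandwiches $\sin t/t$ between consecutive Taylor polynomials $T_k(t)\le \sin t/t\le T_{k+1}(t)$ on $(0,\sqrt{6})$ (for $k$ odd), reduces to studying $K(n)=\sqrt{n}\int_0^{\sqrt{6}}T_k(t)^n\,dt$, writes $T_k(t/\sqrt{n})^n = e^{-t^2/6}\bigl[e^{t^2/(6n)}T_k(t/\sqrt{n})\bigr]^n$ with the bracket equal to $1+\sum_{j\ge 2}a_j t^{2j}/n^{j}$, and expands the $n$-th power via Newton's binomial formula before integrating against the Gaussian. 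Your version---expand $\log(\sin t/t)$ first, then expand $\exp(\text{correction})$---is the textbook form of Laplace's method and is arguably tidier for reading off individual coefficients; the paper's polynomial-sandwich version buys a built-in two-sided bound, so the passage from $T_k$ to $\sin t/t$ is automatic and one avoids the analysis of $\exp(n\cdot\text{Taylor remainder})$ that you correctly flag as the main technical chore in your scheme.

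One small slip to fix: the pointwise bound $|\sin t/t|\le 1/\pi$ on $[\pi,\infty)$ does not by itself make the tail integral finite. Use $|\sin t/t|\le 1/t$ there instead, giving $\int_\pi^\infty t^{-n}\,dt=\pi^{1-n}/(n-1)$ and hence the claimed $O(\sqrt{n}\,\pi^{-n})$ tail.
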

\begin{proof}
We first observe that $I(n)$ can be replaced by
\begin{align*}
J(n):=\sqrt n \int_0^{\sqrt 6}\left|\frac{\sin t}{t}\right|^ndt.
\end{align*}
Indeed,
\begin{align*}
{\sqrt n}\int_{\sqrt 6}^{\infty} \left|\frac{\sin t}{t}\right|^ndt\leq \sqrt n \int_{\sqrt 6}^{\infty}t^{-n}dt=\frac{\sqrt{6n}}{n-1}6^{-n/2}.
\end{align*}

Next, set
\begin{align*}
T_k(t):=\sum_{j=0}^k \frac{(-1)^j}{(2j+1)!}t^{2j}.
\end{align*}
Then, for $k$ odd, one has
\begin{align*}
0\le T_k(t)\leq \frac{\sin t}{t}\leq T_{k+1}(t),
\end{align*}
$t\in (0, \sqrt 6)$, so,
\begin{align*}
\int_0^{\sqrt 6}T_k(t)^{n}dt \leq \int_0^{\sqrt 6}\left(\frac{\sin t}{t}\right)^n dt \leq \int_0^{\sqrt 6}T_{k+1}(t)^{n} dt.
\end{align*}

Therefore, it suffices to show there exists constants $c_3, c_4, \ldots, c_m$ such that
\begin{align*}
K(n):=\sqrt n \int_0^{\sqrt 6}T_k(t)^n dt=\sqrt{\frac{3\pi}{2}}\left[1-\frac{3}{20}\frac 1n - \frac{13}{1120} \frac{1}{n^2}+\sum_{j=3}^m \frac{c_j}{n^j}\right]+O\left(\frac{1}{n^{m+1}}\right),
\end{align*}
whenever $k\geq m+1$.

Making the change of variable $\displaystyle{s=\frac{t}{\sqrt{n}}}$ in $\displaystyle{\int_0^{\sqrt 6}T_k(s)^n}ds$ we obtain
\begin{align*}
K(n)= \int_0^{\sqrt {6n}}T_k\left(\frac{t}{\sqrt n}\right)^ndt=\int_0^{\sqrt{6n}}e^{-t^2/6}\left[e^{t^2/{6n}}T_k\left(\frac{t}{\sqrt n}\right)\right]^n dt.
\end{align*}

Now,
\begin{align*}
e^{t^2/{6n}}=\sum_{j=0}^{\infty}\frac{\left(\frac{t^2}{6n}\right)^j}{j!},
\end{align*}
whence
\begin{align*}
e^{t^2/{6n}}T_k\left(\frac{t}{\sqrt n}\right)=1+\sum_{j=2}^{\infty}\frac{a_j}{n^j}t^{2j},
\end{align*}
in which
\begin{align*}
a_j=\sum_{i=0}^{\min[j,k]}\frac{1}{i!6^i}\frac{(-1)^{j-i}}{(2(j-i)+1)!}.
\end{align*}
Using Newton's Binomial Formula we obtain
\begin{align}\label{5}
\left[e^{t^2/{6n}}T_k\left(\frac{t}{\sqrt n}\right)\right]^{n}
&=1+n\left[\sum_{j=2}^{\infty}\frac{a_j}{n^j}t^{2j}\right]+\frac{n(n-1)}{2}\left[\sum_{j=2}^{\infty}\frac{a_j}{n^j}t^{2j}\right]^2+ \ldots \notag\\
&+\frac{n(n-1)\ldots (n-m+1)}{m!}\left[\sum_{j=2}^{\infty}\frac{a_j}{n^j}t^{2j}\right]^m+ \ldots .
 \end{align}
 We observe that, for  $t \in [0, \sqrt{6n}]$,
 \begin{align*}
 \left|\sum_{j=2}^{\infty}\frac{a_j}{n^j}t^{2j}\right|=\left|e^{\frac{t^2}{6n}}T_k\left(\frac{t}{\sqrt n}\right)-1 \right|<1.
 \end{align*}

Only the first $m+1$ terms on the right-hand side of (\ref{5}) yield the powers $\displaystyle{\frac{1}{n^0}, \frac{1}{n}, \frac{1}{n^2}, \ldots, \frac{1}{n^m}}$. We get
\begin{align*}
 \hspace{0.1cm}
\frac{1}{n^0}, \hspace{0.3cm} \frac{a_2t^4}{n}=-\frac{t^4}{180n}, \hspace{0.3cm} \left(a_3t^6+\frac 12 a_2^2 t^8\right)\frac{1}{n^2}= \left(-\frac{t^6}{2835} +\frac{t^8}{64800}\right)\frac{1}{n^2} \hspace{0.1cm}
\end{align*}
and so on.

The highest power of $t$ yielding $\displaystyle{\frac{1}{n^m}}$ is $\displaystyle{t^{4m}}$. Accordingly, we write
\begin{align}\label{6}
\left[e^{t^2/{6n}}T_k\left(\frac{t}{\sqrt n}\right)\right]^{n}=\sum_{j=0}^{2m}b_j\left(\frac{t}{\sqrt n}\right)^{2j}+R_{2m}\left(\frac{t}{\sqrt n}\right), \quad b_j=b_j(n),
\end{align}
in which
\begin{align*}
\left|R_{2m}\left(\frac{t}{\sqrt n}\right)\right|\leq C \frac{t^{4m+2}}{n^{2m+1}},
\end{align*}
the constant $C>1$ being independent of $t \in [0, \sqrt{6n}]$.

For concreteness, we now work with the polynomial of degree $28$ in (\ref{6}) corresponding to $m=7$. It is given in the Appendix. Formula (\ref{6}) becomes
\begin{align}\label{7}
\left[e^{t^2/{6n}}T_8\left(\frac{t}{\sqrt n}\right)\right]^{n}=\sum_{j=0}^{14}b_j\left(\frac{t}{\sqrt n}\right)^{2j}+O\left(\frac{t^{30}}{ n^{15}}\right), \quad b_j=b_j(n),
\end{align}
and gives all the correct terms in the asymptotic expansion up to $\displaystyle{\frac{1}{n^7}}$. Multiplying the polynomial in (\ref{7}) by $\displaystyle{e^{-t^2/6}}$, integrating the product from $0$ to $\sqrt{6n}$ and using the fact that
\begin{align*}
\int_0^{\sqrt{6n}}e^{-t^2/6}t^{2j}dt&=6^{j+\frac 12}\int_0^{\infty}e^{-t^2}t^{2j}dt+O\left(\frac{1}{n^{8}}\right)\\
&=3^j(2j-1)(2j-3)\ldots 1\, \sqrt{\frac{3 \pi}{2}}+O\left(\frac{1}{n^{8}}\right),
\end{align*}
$j=1, 2, \ldots, 2m$, we obtain, with an error of $\displaystyle{O\left(\frac{1}{n^{8}}\right)}$,
\begin{align*}
K(n)=\sqrt{\frac{3 \pi}{2}}\left[1-\frac{3}{20}\frac 1n-\frac{13}{1120}\frac{1}{n^2}+\sum_{j=3}^7\frac{c_j}{n^j}\right].
\end{align*}
\end{proof}
\begin{remark}
Working with the polynomial of degree $28$ in the Appendix one can show
\begin{align*}
&c_3=\frac{27}{3200}, \quad c_4=\frac{52791}{3942400}, \quad c_5=-\frac{5270328789}{136478720000},\\
 &\quad\quad c_6=-\frac{124996631}{10035200000}, \quad c_7=-\frac{5270328789}{136478720000}.
\end{align*}
\end{remark}
\begin{remark}
A proof using splines that $I(n)\sim \displaystyle{\sqrt{\frac{3 \pi}{2}}}$ is given in \cite{KermanSpektor:2012}.
\end{remark}

\section{\textbf{A generalized Ball's integral inequality}}

We indicate how to determine constants $c_0, c_1, c_2, c_3, \ldots, c_m$ so that, with $n\geq 2$,
\begin{align}\label{9}
I_{\nu}(n):=n^{\nu}\int_0^{\infty}\left(\frac{2^{\nu}\Gamma({\nu}+1)|J_{\nu}(t)|}{t^{\nu}}\right)^nt^{2{\nu}-1}dt=c_0+\frac{c_1}{n}+\frac{c_2}{n^2}+\frac{c_3}{n^3}+\ldots+\frac{c_m}{n^m}+O\left(\frac{1}{n^{m+1}}\right). \end{align}
For definiteness, we do this when $m=3$ .

Our first observation is that $I_{\nu}(n)$ may be replaced by
\begin{align}\label{10}
n^{\nu}\int_0^{2^{\nu}\Gamma({\nu}+1)}\left(\frac{2^{\nu}\Gamma({\nu}+1)|J_{\nu}(t)|}{t^{\nu}}\right)^nt^{2{\nu}-1}dt.
\end{align}
Indeed, using the estimate
\begin{align*}
|J_{\nu}(t)|\leq ct^{-\frac 13}, \quad t \in \R_+, {\nu}\geq 1, c=0.7857468704\ldots,
\end{align*}
given in \cite{Landau:2000}, we get, for $n$ sufficiently large,
\begin{align*}
n^{\nu}\int_x^{\infty}\left(\frac{2^{\nu}\Gamma({\nu}+1)|J_{\nu}(t)|}{t^{\nu}}\right)^nt^{2{\nu}-1}dt&\leq n^{\nu}\int_x^{\infty}\left(2^{\nu}\Gamma({\nu}+1)ct^{-{\nu}-\frac 13}\right)^nt^{2{\nu}-1}dt\\
&=n^{\nu}\left(2^{\nu}\Gamma({\nu}+1)c\right)^n\int_x^{\infty}t^{-\left({\nu}+\frac 13\right)n+2{\nu}-1} dt\\
&=\frac{n^{\nu}\left(2^{\nu}\Gamma({\nu}+1)c\right)^nx^{-\left({\nu}+\frac 13\right)n+2{\nu}}}{n\left({\nu}+\frac 13\right)-2{\nu}}\\
&\leq n^{{\nu}-1}c^{\nu},
\end{align*}
with $x=2^{\nu}\Gamma({\nu}+1)$.

As we did in  Section 2 for $\displaystyle{\frac{\sin t}{t}}$, we approximate $2^\nu t^{-\nu}\Gamma({\nu}+1)J_{\nu}(t)$ in (\ref{10}) by the $k$-th partial sum of its Maclaurin series, namely,
\begin{align}\label{11}
T_k(t):=\sum_{j=0}^k \frac{\left(-\frac {t^2}{4}\right)^j\Gamma({\nu}+1)}{j!\Gamma({\nu}+j+1)},
\end{align}
where $k \geq m+1$.

The change of variable $\displaystyle{t\longrightarrow\frac{t}{\sqrt n}}$ in the integral of
\begin{align*}
K_{\nu}(n):=n^{\nu}\int_0^{2^{\nu}\Gamma({\nu}+1)}T_k(t)^nt^{2{\nu}-1} dt
\end{align*}
yields
\begin{align*}
\int_0^{2^{\nu}\Gamma({\nu}+1)\sqrt n}T_k\left(\frac{t}{\sqrt n}\right)^n t^{2{\nu}-1} dt.
\end{align*}
Using the Maclaurin expansion of $\displaystyle{\exp\left(\frac{t^2}{4n({\nu}+1)}\right)}$, together with (\ref{11}), we obtain
\begin{align*}
K_{\nu}(n)&=\int_0^{2^{\nu}\Gamma({\nu}+1)\sqrt n}\exp\left(\frac{-t^2}{4({\nu}+1)}\right)\left[\exp\left(\frac{t^2}{4n({\nu}+1)}\right)T_k\left(\frac{t}{\sqrt n}\right)\right]^nt^{2{\nu}-1} dt\\
&=\int_0^{2^{\nu}\Gamma({\nu}+1)\sqrt n}\exp\left(\frac{-t^2}{4({\nu}+1)}\right)\left[1+\sum_{j=2}^{\infty}a_j\left(\frac{t^2}{4n}\right)^j\right]^nt^{2{\nu}-1} dt\\
&=\int_0^{2^{\nu}\Gamma({\nu}+1)\sqrt n}\exp\left(\frac{-t^2}{4({\nu}+1)}\right)\Bigg(1+n\left[\sum_{j=2}^{\infty}a_j\left(\frac{t^2}{4n}\right)^j\right]+ \ldots\\
&+\frac{n(n-1)\ldots(n-m+1)}{m!}\left[\sum_{j=2}^{\infty}a_j\left(\frac{t^2}{4n}\right)^j\right]^m+\ldots
\Bigg)t^{2{\nu}-1} dt,
\end{align*}
in which
\begin{align*}
a_j=\sum_{i=0}^{\min[j, k]}(-1)^i\frac{\Gamma({\nu}+1)}{({\nu}+1)^{j-1}(j-i)!i!\Gamma({\nu}+i+1)}.
\end{align*}

One finds that
\begin{align*}
a_2&=\frac{-1}{2({\nu}+1)^2({\nu}+2)}\\
a_3&=\frac{-2}{3({\nu}+1)^3({\nu}+2)({\nu}+3)}\\
a_4&=\frac{{\nu}-5}{8({\nu}+1)^4({\nu}+2)({\nu}+3)({\nu}+4)}
\end{align*}
and that, moreover,
\begin{align*}
c_0&=\int_0^{\infty}\exp\left(\frac{-t^2}{4({\nu}+1)}\right)t^{2{\nu}-1}dt=\frac{4^{\nu}}{2}({\nu}+1)^{\nu}\Gamma({\nu})\\
c_1&=\frac{a_2}{16}\int_0^{\infty}\exp\left(\frac{-t^2}{4({\nu}+1)}\right)t^4t^{2{\nu}-1}dt=\frac{-4^{{\nu}-1}({\nu}+1)^{{\nu}}\Gamma({\nu}+2)}{{\nu}+2}\\
c_2&=\frac{a_3}{64}\int_0^{\infty}\exp\left(\frac{-t^2}{4({\nu}+1)}\right)t^6t^{2{\nu}-1}dt
+\frac{a_2^2}{512}\int_0^{\infty}\exp\left(\frac{-t^2}{4({\nu}+1)}\right)t^8t^{2{\nu}-1}dt\\
&\hspace{6.5cm}=4^{{\nu}-2}({\nu}+1)^{{\nu}}\Gamma({\nu}+2)\frac{3{\nu}^2+2{\nu}-5}{3({\nu}+2)({\nu}+3)}\\
\mbox{and}\\
c_3&=\left(\frac{a_4}{256}-\frac{a^2_2}{512}\right)\int_0^{\infty}\exp\left(\frac{-t^2}{4({\nu}+1)}\right)t^8t^{2{\nu}-1}dt+\frac{a_2a_3}{1024}\int_0^{\infty}\exp\left(\frac{-t^2}{4({\nu}+1)}\right)t^{10}t^{2{\nu}-1}dt\\
&\hspace{6.5cm}+\frac{a_2^3}{24576}\int_0^{\infty}\exp\left(\frac{-t^2}{4({\nu}+1)}\right)t^{12}t^{2{\nu}-1}dt\\
&\hspace{6.5cm}=-4^{{\nu}-2} ({\nu}+1)^{{\nu}+1}\Gamma({\nu}+2)\frac{{\nu}^3-{\nu}^2-4{\nu}-8}{6({\nu}+2)^2({\nu}+4)}.
\end{align*}

We observe that, when ${\nu}=1$, $c_0=4$, so
\begin{align*}
\lim_{n\longrightarrow \infty}n \int_0^{\infty}\left(\frac{2|J_1(t)|}{t}\right)^ntdt=4,
\end{align*}
which means the maximum value of $I_1(n)$ occurs at $n=2$ and in the limit as $n$ approaches infinity.

However, when $\nu>1$ and $n\geq 2$, the $c_0$ in (\ref{9}) is greater than the $I_{\nu}(n)$. In particular,
\begin{align*}
I_{\nu}(2)=2^{3\nu}\Gamma(\nu+1)^2\int_0^{\infty}\frac{J_{\nu}(t)^2}{t}dt=2^{3\nu-1}\nu !(\nu-1)!< 2^{2\nu-1}(\nu+1)^{\nu}(\nu-1)!=c_0.
\end{align*}

\bigskip

\textbf{Acknowledgment.} We would like to thank H.~K$\ddot{\textmd{o}}$nig for pointing out the general Ball's integral inequality in \cite{OP:2000} and for his many helpful comments.

\appendix
\section{}
The polynomial in (\ref{6}) corresponding to $m=7$ is
\begin{align*}
&1-\frac{1}{180 \, n}t^4-\frac{1}{2835 \, n^2}t^6+\left(\frac{1}{64800 \, n^2}-\frac{1}{37800 \, n^3}\right)t^8+\left(\frac{1}{510300\, n^3}-\frac{1}{467775 \, n^4}\right)t^{10}\\
&+ \Bigg(-\frac{1}{34992000\,  n^3}+\frac{269}{1285956000\,  n^4}-\frac{691}{3831077250 \, n^5}\Bigg)t^{12}
+\Bigg(-\frac{1}{183708000\,  n^4}+\frac{1}{47151720\,  n^5}\\
&-\frac{2}{127702575\,  n^6}\Bigg)t^{14}+\Bigg(\frac{1}{25194240000\,  n^4}-\frac{349}{462944160000\,  n^5}+\frac{23237}{11033502480000 \, n^6}\\
&-\frac{3617}{2605132530000\,  n^7}\Bigg)t^{16}+\Bigg(\frac{1}{99202320000\,  n^5}-\frac{5543}{60153806790000\,  n^6}+\frac{9001}{43444416015000 \, n^7}\\
&-\frac{43867}{350813659321125\,  n^8}\Bigg)t^{18}+\Bigg(-\frac{1}{22674816000000\,  n^5}+\frac{143}{83329948800000\,  n^6}-\frac{146843}{13902213124800000 \, n^7}\\
&+\frac{62809}{3094897445640000\,  n^8}-\frac{174611}{15313294652906250 \, n^9}\Bigg)t^{20}+\Bigg(\frac{-1}{71425670400000\,  n^6}+\frac{10643}{43310740888800000\,  n^7}\\
&-\frac{17}{14582741040000 \, n^8}+\frac{1621577}{817189465242150000\,  n^9}-\frac{155366}{147926426347074375 \, n^{10}}\Bigg)t^{22}\\
&+\Bigg(\frac{1}{24488801280000000\,  n^6}-\frac{509}{179992689408000000\,  n^7}+\frac{90749797}{2837719743034176000000 \, n^8}\\
&-\frac{370206979}{2948075510818838400000\,  n^9}+\frac{441301082837}{2275545784913290890000000 \, n^{10}}-\frac{236364091}{2423034863565078262500\, n^{11}}\Bigg)t^{24}
\end{align*}
\begin{align*}
&+\Bigg(\frac{1}{64283103360000000\,  n^7}-\frac{7241}{155918667199680000000\,  n^8}+\frac{463523}{118238322626424000000\, n^9}\\
&-\frac{465818341}{35008396690973706000000\,  n^{10}}+\frac{41342265857}{2180731377208570436250000\, n^{11}}-\frac{1315862}{144228265688397515625\, n^{12}}\Bigg)t^{26}\\
&+\Bigg(-\frac{1}{30855889612800000000\,  n^7}+\frac{589}{161993420467200000000\,  n^8}-\frac{6915119}{102157910749230336000000\, n^9}\\
&+\frac{3673793561}{7959803879210863680000000\,  n^{10}}-\frac{570787478291}{4095982412843923600200000000 \, n^{11}}\\
&+\frac{27997256387}{15097371072982410712500000\, n^{12}}-\frac{3392780147}{3952575621190533915703125\, n^{13}}\Bigg)t^{28}.
\end{align*}

\bigskip


\end{document}